\def\quotes#1{``#1''}
\def\o#1{\operatorname{#1}}
\def\p#1{\left( #1 \right)}
\def\set#1{\left\{ #1 \right\}}
\def\Z{\mathbb{Z}}
\def\Q{\mathbb{Q}}
\theoremstyle{plain}
\newtheorem{thrm}{Theorem}[section]
\newtheorem{lemm}[thrm]{Lemma}
\newtheorem{prop}[thrm]{Proposition}
\theoremstyle{definition}
\newtheorem{defn}[thrm]{Definition}
\title{Curious Squares}
\author{Neelima Borade and Jacob Mayle}
\subjclass[2010]{11A63, 11A07, 11G05}
\begin{document}

\begin{abstract} 
A \textit{curious number} is a palindromic number whose base ten representation has the form \linebreak $a \ldots a b \ldots b a \ldots a$. In this paper, we  determine all curious numbers that are perfect squares. Our proof involves reducing the search for such numbers to several single variable families. From here, we complete the proof in two different ways. The first approach is elementary, though somewhat ad hoc. The second entails studying integral points on elliptic curves and is more systematic.
\end{abstract}

\maketitle

\vspace{-.5cm}

\section{introduction}

Ian Stewart begins his popular recreational mathematics book \textit{Professor Stewart's Hoard of Mathematical Treasures} \cite{St2010} with the following \quotes{calculator curiosity}:
\begin{align*}
(8 \times 8) + 13 &= 77 \\
(8 \times 88) + 13 &= 717 \\
(8 \times 888) + 13 &= 7117 \\
(8 \times 8888) + 13 &= 71117 \\
(8 \times 88888) + 13 &= 711117 \\
(8 \times 888888) + 13 &= 7111117 \\
(8 \times 8888888) + 13 &= 71111117 \\
(8 \times 88888888) + 13 &= 711111117.
\end{align*}
The numbers on the right hand sides of the equalities above are examples of what we call \textit{curious} numbers.

\begin{defn} For nonnegative integers $m,n$, an integer is \textit{$(m,n)$-curious} if its base ten representation is
\[ \underbrace{a \ldots a}_{m} \underbrace{b \ldots b}_{n} \underbrace{a \ldots a}_{m} \]
for some integers $1 \leq a \leq 9$ and $0 \leq b \leq 9$. A number is called \textit{curious} if it is $(m,n)$-curious for some $m,n$.
\end{defn}

We see that none of the numbers in Stewart's \quotes{calculator curiosity} are perfect squares. This follows from the observation that each number in the family $71\ldots17$ is congruent to $2$ modulo $5$, a quadratic non-residue.

The question of which repdigits (i.e. $(0,n)$-curious numbers) are perfect squares is a pleasant exercise in elementary number theory. There are 28 repdigits of length at most three. Among those, only $0,1,4,$ and $9$ are perfect squares. No repdigit of length at least four is a perfect square since one may verify that each of
\[ 1111, \, 2222, \, 3333, \, 4444, \, 5555, \, 6666, \, 7777, \, 8888, \, \text{and } 9999 \]
is a quadratic non-residue modulo 10000.

The general question of which repdigits are perfect powers is known as \textit{Obl\'ath's problem}. After R. Obl\'ath's  \cite{Ob1956} partial solution, the problem was fully solved by Y. Bugeaud and M. Mignotte \cite{BM19999} using bounds on $p$-adic logarithms. Variations on Obl\'ath's problem have been studied by several authors. For instance, A. Gica and L. Panaitopol \cite{GP2003} determined all perfect squares among the \textit{near repdigits}, which are integers for which all but a single digit are equal. Recently, B. Goddard and J. Rouse \cite{GR2017}  determined the perfect squares that may be written as the sum of two repdigits. Whereas other authors relied on techniques related to Pell equations to settle difficult cases, Goddard and Rouse used a powerful technique involving elliptic curves.

In this paper, we determine which curious numbers are perfect squares. Specifically, we prove the following.
\begin{thrm} \label{main-thrm} The curious numbers that are perfect squares are  $0, \, 1,\, 4,\, 9,\, 121,\, 484,\, 676,$ and $44944.$
\end{thrm}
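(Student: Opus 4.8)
The plan is to rewrite the perfect‑square condition as a polynomial–exponential Diophantine equation, reduce it by congruences to a handful of single‑variable families, and then dispatch each family — once by elementary means and once via elliptic curves. Let $R_k=\p{10^k-1}/9$ be the $k$‑th repunit. An $(m,n)$‑curious number can be written
\[ N = a R_m\p{10^{m+n}+1} + b\,10^m R_n, \]
so that $9N = a\,10^{2m+n}+(b-a)\,10^{m+n}-(b-a)\,10^m-a$, and $N$ is a perfect square exactly when $9N$ is. I would first dispose of the repdigit case $m=0$ (handled in the introduction) and then verify directly, by computer, that the perfect squares among curious numbers whose total number of digits $2m+n$ is below a small explicit bound are precisely the eight listed. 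So from now on assume $m\ge 1$ and $2m+n$ large.

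\emph{Reduction to single‑variable families.} If $N$ is a perfect square then $N$ is a square in $\Z_2$ and in $\Z_5$; equivalently $N$ is congruent to a square modulo every power of $10$. Applied to the last four digits this forces $m\le 3$: for $m\ge 4$ those digits form the number $1111a$, the condition modulo $16$ forces $a=7$, and $7777$ is a non‑residue modulo $5$. Fixing $m\in\set{1,2,3}$ and digits $a,b$ and considering the family $N=N_{m,a,b}(n)$, one finds
\[ 9\,N_{m,a,b}(n) = \alpha\cdot 10^{n}+\beta,\qquad \alpha=10^m\p{a\cdot 10^m+b-a},\quad \beta=(a-b)\,10^m-a, \]
with $\alpha,\beta$ nonzero constants depending only on $m,a,b$. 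Since $10^n\to 0$ in $\Z_2$ and in $\Z_5$, we get $9N_{m,a,b}(n)\to\beta$ in each of these; hence if $\beta$ fails to be a square in $\Z_2$, or fails to be a square in $\Z_5$, then — comparing $p$‑adic valuations and unit parts — $N_{m,a,b}(n)$ is not a perfect square once $n$ exceeds an explicit bound, so only a finite check remains. The families that genuinely survive are therefore the finitely many triples $(m,a,b)$ for which $\beta$ is a square in both $\Z_2$ and $\Z_5$; for each, the task is to decide for which $n$ the number $\alpha\cdot 10^n+\beta$ is a perfect square, with no a priori bound on $n$.

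\emph{Elementary endgame.} Split $n$ by parity. With $\varepsilon=n\bmod 2$ and $z=10^{(n-\varepsilon)/2}$, the equation $\alpha\cdot 10^n+\beta=y^2$ becomes the generalized Pell equation $y^2-D z^2=\beta$ with $D=10^{\varepsilon}\alpha$. If $D$ is a perfect square this is a finite factorization problem; otherwise the integer solutions split into finitely many binary recurrence sequences, and for each I would pick a modulus in which $10$ has small multiplicative order and follow the recurrence to rule out every solution, beyond those already found, with $z$ a power of $10$. \emph{Elliptic endgame.} Split $n$ modulo $3$ instead. Writing $n=3k+r$ and multiplying $\alpha\cdot 10^{r}\p{10^k}^3=y^2-\beta$ through by $\p{\alpha\cdot 10^r}^2$ produces a Mordell curve
\[ Y^2=X^3+\p{\alpha\cdot 10^r}^2\beta,\qquad X=\alpha\cdot 10^{r+k},\quad Y=\alpha\cdot 10^{r}\,y, \]
whose constant term is nonzero, so it has only finitely many integral points; listing them by a standard integral‑point computation and keeping those with $X$ of the required shape $\alpha\cdot 10^{r+k}$ settles the family.

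The congruence bookkeeping of the reduction is routine; the genuine obstacle is the family‑by‑family work at the end. In the elementary endgame there is no single sieving modulus that handles all the Pell equations at once — each needs its own — which is precisely what makes the argument \quotes{ad hoc}. In the elliptic endgame the difficulty moves to the integral‑point determinations on the Mordell curves that appear, some possibly of positive rank, which is systematic but relies on heavier machinery. One must also take care that the finite list of surviving families produced by the reduction is exhaustive.
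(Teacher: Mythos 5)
Your outline is correct and, in its skeleton, coincides with the paper's: your identity $9\cdot a_mb_na_m=\alpha\cdot 10^{n}+\beta$ with $\alpha=10^m(a\cdot 10^m+b-a)$ and $\beta=(a-b)10^m-a$ is exactly equation (\ref{alg-eq}) with $\alpha=N_{a,b,m}$ and $\beta=M_{a,b,m}$; your reduction to finitely many one-parameter families via squares in $\Z_2$ and $\Z_5$ (including the clean observation that $m\ge 4$ forces $a=7$ mod $16$ and then fails mod $5$) carries the same information as the paper's brute-force tabulation of $\pi(\mathcal{C})\cap\pi(\mathcal{S})$ modulo $10^7$, which produces the thirteen surviving families; and your elliptic endgame --- write $n=3k+r$, multiply by $(\alpha\cdot 10^r)^2$, land on the Mordell curve $Y^2=X^3+(\alpha\cdot 10^r)^2\beta$, and list integral points of the shape $X=\alpha\cdot 10^{r+k}$ --- is precisely Proposition \ref{ell}. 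The one place you genuinely diverge is the elementary endgame. You propose splitting $n$ by parity, solving generalized Pell equations $y^2-Dz^2=\beta$, and sieving the resulting binary recurrences to exclude $z$ a power of ten; the paper never leaves modular arithmetic: for five families $\beta$ is already a quadratic non-residue modulo $\alpha$ itself (Lemma \ref{mod-lem}, since $(3y)^2\equiv\beta\pmod{\alpha}$), and for the remaining five one finds an auxiliary modulus $N$ coprime to $10$ such that $\alpha\cdot 10^{k}+\beta$ is a non-residue mod $N$ for every $k$ in a full period of $\operatorname{ord}_N(10)$ (Lemma \ref{mod-lem2}). That covering-congruence argument is strictly lighter than the Pell machinery --- which is exactly the route the paper credits to earlier authors and set out to avoid --- and it sidesteps any worry about a recurrence sieve failing to close; both methods, of course, amount to a search for a modular certificate. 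The only cautions for your version are that the finite list of surviving families must actually be enumerated and checked for exhaustiveness (it does match the paper's thirteen once small $n$ are handled directly), and that the Pell step remains a plan rather than a proof until each family's recurrences are actually sieved.
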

Broadly speaking, we prove Theorem \ref{main-thrm} as follows. First, in \S \ref{alg-reform}, we recast the problem in algebraic terms. Next, in \S \ref{mod107}, we reduce modulo $10^7$ to narrow down our search for curious squares to thirteen single variable families. From here, we complete the proof in two distinct ways. The first approach, given in \S \ref{modfamilies}, proceeds by studying the families via modular arithmetic. This approach is somewhat ad hoc, whereas our second approach, given in \S \ref{elliptic},  is more  systematic. Here we study the families via integral points on elliptic curves, along the lines of Goddard and Rouse \cite[\S4]{GR2017}.

\section{Algebraic reformulation} \label{alg-reform}

In this section, we recast our problem in algebraic terms. To do so, we start by developing an algebraic expression for curious numbers. Note the following standard expression for repdigits,
\begin{equation} \label{repdigit} a_m \coloneqq \underbrace{a \ldots a}_m = a \cdot \frac{10^m-1}{10-1}. \end{equation}
Now observe that
\begin{equation} \label{cur-notation}  a_mb_na_m \coloneqq
\underbrace{a \ldots a}_{m} \underbrace{b \ldots b}_{n} \underbrace{a \ldots a}_{m} = 10^{m+n} \cdot a_m + 10^m \cdot b_n + a_m.
\end{equation}
In the shorthand $ a_mb_na_m$, juxtaposition denotes concatenation rather than multiplication. Though this notation is more compact, it may cause confusion. In such instances, we opt to use the longhand notation instead. Now, upon combining equations (\ref{repdigit}) and (\ref{cur-notation}), we obtain an expression for curious numbers, 
\begin{equation} \label{exp1}
a_mb_na_m
= a \cdot 10^{m+n} \cdot \frac{10^m-1}{10-1} + b \cdot 10^m \cdot \frac{10^n-1}{10-1} + a\cdot \frac{10^m-1}{10-1}. \\
\end{equation}

To streamline matters, we define the integers
\begin{equation} \label{MN}
    M_{a,b,m} \coloneqq 10^m \cdot (a-b) - a \quad \text{ and } \quad
    N_{a,b,m} \coloneqq  10^m (a \cdot 10^m + b - a).
\end{equation}
After some regrouping, (\ref{exp1}) becomes
\begin{equation} \label{cur-alg-clean}
a_m b_n a_m
= \frac{1}{9} \p{N_{a,b,m} \cdot 10^n + M_{a,b,m}}\!.
\end{equation}

We're interested in determining the collection of $a_mb_na_m$ that are perfect squares. Thus, by multiplying (\ref{cur-alg-clean}) through by 9, we record that our problem reduces to solving the equation 
\begin{equation} \label{alg-eq} (3y)^2 = N_{a,b,m} \cdot 10^n + M_{a,b,m}  \end{equation}
in the nonnegative integers $a,b,m,n,y$ with the restrictions that $1 \leq a \leq 9$ and $0 \leq b \leq 9$.

\section{Narrowing search to several single variable families} \label{mod107}

Let $\mathcal{C}_{m,n}$ denote the set of $(m,n)$-curious numbers and write $\mathcal{C} = \bigcup_{m,n \geq 0} \mathcal{C}_{m,n}$ for the set of all curious numbers. Let $\mathcal{S}$ denote the set of all perfect squares. For a positive integer $k$, define the set
\[ \mathcal{S}_k \coloneqq \set{s^2 \in \mathcal{S} : s \leq k}\!. \]
Finally, we denote the reduction modulo $10^7$ map by
\[\pi: \Z \longrightarrow \Z/10^7\Z.\]

In the notation above, our ultimate objective is to determine the intersection $\mathcal{C} \cap \mathcal{S}$. We start by computing the intersection $\pi(\mathcal{C}) \cap \pi(\mathcal{S})$. Here, our choice to reduce modulo $10^7$ is somewhat arbitrary. In fact, what follows works provided that we reduce modulo $10^k$ for any $k \geq 4$. However, we choose $k = 7$ because this is the smallest exponent for which the number of families that we will need to consider in the next section is minimal.

Observe that $\pi(\mathcal{C})$ and $\pi(\mathcal{S})$ may be realized as
\[ \pi(\mathcal{C}) = \bigcup_{\substack{(m,n) \\ 0 \leq m \leq 7 \\ 0 \leq n \leq 7-m}} \pi(\mathcal{C}_{m,n}) \quad \text{and} \quad \pi(\mathcal{S}) = \pi(\mathcal{S}_{10^{7}}). \]
As the sets $\mathcal{C}_{m,n}$ and $\mathcal{S}_{10^{7}}$ are finite and of reasonable size, it is straightforward to determine $\pi(\mathcal{C})$ and $\pi(\mathcal{S})$ via machine computation. We do so using SageMath \cite{SAGE}, and find that they have the intersection
\begin{align*} 
\pi(\mathcal{C}) \cap \pi(\mathcal{S}) = 
\{
&0,
 1,
 4,
 9,
 121,
 161,
 484,
 656,
 676,
 929,
 969,
 1001,
 1441,
 1881,
 4004,
 4224,
 5225,
 6116, \\
& 6336,
 9009,
 9449,
 9889,
 10001,
 14441,
 18881,
 40004,
 44544,
 44644,
 44944,
 52225,
 67776, \\
& 90009,
 94449,
 98889,
 100001,
 144441,
 188881,
 400004,
 442244,
 447744,
 522225,
 655556, \\
& 677776, 
 900009,
 944449,
 988889,
 1000001,
 1444441,
 1888881,
 2222224,
 2222225,
 2222244, \\
& 3333444, 
 4000004,
 4222224,
 4222244,
 4333444,
 4422244,
 4433344,
 4433444,
 4441444, \\
& 4444441,
 4444449,
 4445444,
 4449444,
 4477444,
 4777444,
 4777744,
 5222225,
 5555556, \\
& 6555556,
 7777444,
 8888881, 
 8888889,
 9000009,
 9444449,
 9888889
\}.
\end{align*}

We have that $\pi(\mathcal{C} \cap \mathcal{S}) \subseteq \pi(\mathcal{C}) \cap \pi(\mathcal{S})$. Those elements of $\pi(\mathcal{C}) \cap \pi(\mathcal{S})$ whose preimage under the map $\mathcal{C} \overset{\pi}{\longrightarrow} \Z/10^7\Z$ consists of a single non-square integer are not members of $\pi(\mathcal{C} \cap \mathcal{S})$. For instance, the preimage of $4333444$ under $\mathcal{C} \overset{\pi}{\longrightarrow} \Z/10^7\Z$ is the singleton $\set{444333444}$. Since $444333444$ is not a perfect square, we ascertain that $44333444 \not\in \pi(\mathcal{C} \cap \mathcal{S})$. Proceeding in this way, we find that
\begin{align*}
\pi(\mathcal{C} \cap \mathcal{S}) \subseteq
\{
& 0,
 1,
 4,
 9,
 121,
 484,
 676,
 44944,
 2222224,
 2222225,
 2222244,
 3333444, \\
& 4444441,
 4444449,
 5555556,
 7777444,
 8888881,
 8888889
\}.
\end{align*}
Taking the preimage of the above set inclusion under the map $\mathcal{C} \overset{\pi}{\longrightarrow} \Z/10^7\Z$, we deduce that 
\begin{align} \label{CSpos}
\mathcal{C} \cap \mathcal{S} \subseteq 
\{
&0,
1,
4,
9,
121,
 484,
 676,
 44944,
 10 \ldots 01,
 14\ldots41, 
 18\ldots81, 
 40 \ldots 04,
 42\ldots24, 
 442\ldots244, \\
& 4443\ldots3444,
 4447 \ldots 7444,
 52\ldots25,
 65\ldots56,
 90 \ldots 09,
 94\ldots49,
 98\ldots89 \nonumber
\}.
\end{align}
The elements above that are listed with an ellipsis are placeholders for the appropriately corresponding single variable family of curious numbers. For instance, we write $4443\ldots3444$ to denote the family
\[ 4443\ldots3444 \coloneqq \{444\underbrace{3\ldots3}_n444 : n \geq 0\}. \]

The assertion of Theorem \ref{main-thrm} is that $\mathcal{C} \cap \mathcal{S} = \set{0, \, 1, \, 4, \, 9, \, 121, \, 484, \, 676, \, 44944}$. Thus, by (\ref{CSpos}), it suffices to prove that there does not exist a perfect square in any of the following thirteen single variable families,
\begin{align*} \mathcal{F} \coloneqq \{
 &10 \ldots 01, \,
 14\ldots41, \,
 18\ldots81, \,
 40 \ldots 04, \,
 42\ldots24, \,
 442\ldots244, \,
 4443\ldots3444, \, \\
& 4447 \ldots 7444, \,
 52\ldots25, \,
 65\ldots56, \,
 90 \ldots 09, \,
 94\ldots49, \,
 98\ldots89 \}.
 \end{align*}

\section{Considering families via modular arithmetic} \label{modfamilies}

In this section, we use modular arithmetic considerations to prove that none of the families listed in $\mathcal{F}$ contain a square. Let's begin by considering $10\ldots01$. Observe that each number in this family is congruent to 2 modulo 3. As 2 is a quadratic non-residue modulo 3, it follows at once that no number in this family is a square. In fact, since each of the numbers in $40\ldots04$ and $90\ldots09$ is a square multiple of a number in $10\ldots01$, we deduce that these two families contain no squares as well.

Given the above, our problem is reduced to proving that the ten remaining families of $\mathcal{F}$ contain no squares. For each of these families, we record below the coefficients  $M_{a,b,m}$ and $N_{a,b,m}$, as defined in (\ref{MN}).
\renewcommand{\arraystretch}{1.20}
\begin{table}[H] 
\begin{tabular}{|lcccrr|} \hline
Family           & $a$ & $b$ & $m$ & $M_{a,b,m}$ & $N_{a,b,m}$ \\ \hline
$14\ldots41$     & 1   & 4   & 1   & $-31$       & 130         \\
$18\ldots81$     & 1   & 8   & 1   & $-71$       & 170         \\
$42\ldots24$     & 4   & 2   & 1   & 16          & 380         \\
$442\ldots244$   & 4   & 2   & 2   & 196         & 39800       \\
$4443\ldots3444$ & 4   & 3   & 3   & 996         & 3999000     \\
$4447\ldots7444$ & 4   & 7   & 3   & $-3004$     & 4003000     \\
$52\ldots25$     & 5   & 2   & 1   & 25          & 470         \\
$65\ldots56$     & 6   & 5   & 1   & 4           & 590         \\
$94\ldots49$     & 9   & 4   & 1   & 41          & 850         \\
$98\ldots89$     & 9   & 8   & 1   & 1           & 890 \\ \hline       
\end{tabular}
\caption{} \label{MN_table}
\end{table}
\vspace{-.4cm}

We rule out the possibility of perfect squares in each of the above families via two elementary lemmas. 

\begin{lemm} \label{mod-lem} If $M_{a,b,m}$ is a quadratic non-residue modulo $N_{a,b,m}$, then $a_mb_na_m$ is not a square for each $n \geq 0$.
\end{lemm}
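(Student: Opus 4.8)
The plan is to reduce equation (\ref{alg-eq}) modulo $N_{a,b,m}$ and argue by contraposition, so this is a very short proof once the algebraic reformulation of \S\ref{alg-reform} is in hand.

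Concretely, I would suppose toward a contradiction that $a_m b_n a_m = y^2$ for some $n \geq 0$ and some nonnegative integer $y$. Multiplying through by $9$ and invoking equation (\ref{alg-eq}), this forces
\[ (3y)^2 = N_{a,b,m} \cdot 10^n + M_{a,b,m}. \]
Since $n \geq 0$, the factor $10^n$ is an integer, so $N_{a,b,m} \cdot 10^n \equiv 0 \pmod{N_{a,b,m}}$, and reducing the above identity modulo $N_{a,b,m}$ yields
\[ (3y)^2 \equiv M_{a,b,m} \pmod{N_{a,b,m}}. \]
Thus $M_{a,b,m}$ is congruent to a perfect square modulo $N_{a,b,m}$, i.e. it is a quadratic residue modulo $N_{a,b,m}$, contradicting the hypothesis. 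This establishes the contrapositive of the claimed implication, and hence the lemma.

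There is no real obstacle here: the content is entirely carried by (\ref{alg-eq}). The only point worth a word of care is the meaning of \quotes{quadratic residue}: we intend it in the broad sense of \quotes{congruent to a perfect square modulo $N_{a,b,m}$}, with no coprimality requirement, so that the displayed congruence immediately produces the contradiction even when $\gcd(M_{a,b,m}, N_{a,b,m}) > 1$ (which does occur for some of the families in Table \ref{MN_table}). One should also observe that $N_{a,b,m} > 0$ for every family under consideration, as recorded in Table \ref{MN_table}, so that reduction modulo $N_{a,b,m}$ is meaningful in each case to which the lemma will be applied.
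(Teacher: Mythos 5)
Your proof is correct and follows exactly the paper's argument: reduce equation (\ref{alg-eq}) modulo $N_{a,b,m}$ to see that a square in the family forces $M_{a,b,m}$ to be a quadratic residue. The extra remarks on the broad sense of \quotes{quadratic residue} and the positivity of $N_{a,b,m}$ are sensible clarifications but do not change the substance.
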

\begin{proof} We prove the contrapositive. Suppose that $n_0$ is a nonnegative integer for which $a_mb_{n_0}a_m$ is a square. Then $a,b,m,n_0,$ and $y \coloneqq \sqrt{a_m b_{n_0}a_m}$ give a solution to the equation (\ref{alg-eq}). Reducing this equation modulo $N_{a,b,m}$, we find that 
\[ M_{a,b,m} \equiv (3y)^2 \pmod{N_{a,b,m}}. \]
Hence, $M_{a,b,m}$ is a quadratic residue modulo $N_{a,b,m}$.
\end{proof}

For instance, let's consider the family $14\ldots41$ in view of the above lemma. From Table \ref{MN_table}, we read that $(a,b,m) = (1,4,1)$,  $M_{a,b,m} = -31$, and $N_{a,b,m} = 130$. Note that $-31$ is a quadratic non-residue modulo $130$. Thus, we deduce that the family $14\ldots41$ contains no perfect squares. Applying Lemma \ref{mod-lem} to the data from Table \ref{MN_table} in this way for each of the ten families, we conclude that none of the following five families contain a perfect square:
\[ 14\ldots41, \, 18\ldots81, \, 4443\ldots3444, \, 4447\ldots7444, \, \text{and } 94\ldots49. \]

This leaves the five other families to consider. We do so via our next lemma, which requires some notation from elementary number theory. Let $M,N$ be  nonzero integers with $\gcd(M,N) = 1$. Then $M$ is invertible modulo $N$. Hence, we may speak of its multiplicative order modulo $N$, which is denoted by  $\o{ord}_N(M)$. Explicitly, $\o{ord}_N(M)$ is the least positive integer with the property that $M^{\o{ord}_N(M)} \equiv 1 \pmod{N}$.

\begin{lemm} \label{mod-lem2} Let $N$ be a positive integer with $\gcd(N,10) = 1$. If for each integer $k$ with $0 \leq k < \o{ord}_N(10)$, we have that $N_{a,b,m} \cdot 10^k + M_{a,b,m}$ is a quadratic non-residue modulo $N$, then $a_m b_n a_m$ is not a square for each $n \geq 0$.
\end{lemm}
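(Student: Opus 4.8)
The plan is to mimic the proof of Lemma \ref{mod-lem}, but now exploiting the periodicity of powers of $10$ modulo $N$ to reduce an \emph{a priori} infinite family of residues to a finite check. First I would prove the contrapositive: suppose $a_m b_{n_0} a_m$ is a perfect square for some $n_0 \geq 0$, and set $y \coloneqq \sqrt{a_m b_{n_0} a_m}$, so that $(a,b,m,n_0,y)$ solves equation (\ref{alg-eq}). Reducing (\ref{alg-eq}) modulo $N$ gives
\[ (3y)^2 \equiv N_{a,b,m} \cdot 10^{n_0} + M_{a,b,m} \pmod{N}. \]
Since $\gcd(N,10) = 1$, the value of $10^{n_0} \bmod N$ depends only on $n_0 \bmod \o{ord}_N(10)$; writing $n_0 = q \cdot \o{ord}_N(10) + k$ with $0 \leq k < \o{ord}_N(10)$, we get $10^{n_0} \equiv 10^k \pmod N$, and hence
\[ (3y)^2 \equiv N_{a,b,m} \cdot 10^{k} + M_{a,b,m} \pmod{N}. \]
Thus $N_{a,b,m} \cdot 10^k + M_{a,b,m}$ is a quadratic residue modulo $N$ for this particular $k$, contradicting the hypothesis that it is a non-residue for every $k$ in the range $0 \leq k < \o{ord}_N(10)$. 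This proves the contrapositive, and hence the lemma.

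The only genuine subtlety, and the step I would be most careful about, is the reduction $10^{n_0} \equiv 10^k \pmod N$: it relies on $\gcd(N,10)=1$ so that $10$ is a unit modulo $N$ and $\o{ord}_N(10)$ is well-defined and positive. Everything else is a one-line substitution. I would state explicitly that $N_{a,b,m}$ and $M_{a,b,m}$ need not be coprime to $N$ for the argument to go through — only $10$ must be invertible. Apart from that, there is no real obstacle; the lemma is essentially a bookkeeping refinement of Lemma \ref{mod-lem} that trades a modulus of size $N_{a,b,m}$ (often large) for a fixed small modulus $N$ at the cost of checking $\o{ord}_N(10)$ residues instead of one.
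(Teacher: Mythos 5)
Your proof is correct and follows essentially the same route as the paper's: prove the contrapositive by taking a hypothetical square $a_m b_{n_0} a_m$, reducing equation (\ref{alg-eq}) modulo $N$, and using $10^{n_0} \equiv 10^{k} \pmod{N}$ for the reduced exponent $k \equiv n_0 \pmod{\o{ord}_N(10)}$ to contradict the non-residue hypothesis. The remarks on where $\gcd(N,10)=1$ is actually used are accurate but not needed beyond what the paper already does.
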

\begin{proof} We prove the contrapositive. Suppose  that $n_0$ is such that $a_m b_{n_0} a_m$ is a square. Then $a,b,m,n_0,$ and $y \coloneqq \sqrt{a_m b_{n_0}a_m}$ give a solution to the equation (\ref{alg-eq}). Write $k_0$ to denote the integer for which $0 \leq k_0 < \o{ord}_N(10)$ and $k_0 \equiv n_0 \pmod{\o{ord}_N(10)}$. Then $10^{n_0} \equiv 10^{k_0} \pmod{N}$, so upon reducing (\ref{alg-eq}) modulo $N$, we find that
\[ N_{a,b,m} \cdot 10^{k_0} + M_{a,b,m} \equiv N_{a,b,m} \cdot 10^{n_0} + M_{a,b,m} \equiv (3y)^2 \pmod{N}. \]
Hence, $N_{a,b,m} \cdot 10^{k_0} + M_{a,b,m}$ is a quadratic residue modulo $N$.
\end{proof}

Using SageMath, we search for (and find) appropriate $N$ as in Lemma \ref{mod-lem2} for each of five remaining families. The relevant data is tabulated below. 
\renewcommand{\arraystretch}{1.20}
\begin{table}[H]
\begin{tabular}{|llll|}
\hline
Family         & $N$     & $\o{ord}_N(10)$ & $N_{a,b,m} \cdot 10^k + M_{a,b,m} \mod N$ \, for\, $0 \leq k < \o{ord}_N(10)$ \\ \hline
$42\ldots24$   & $999$   & $3$             & $396, \, 819, \, 54$                                                                \\ 
$442\ldots244$ & $77$    & $6$             & $33, \, 29, \, 66, \, 51, \, 55, \, 18$                                                      \\ 
$52\ldots25$   & $91$    & $6$             & $40, \, 84, \, 69, \, 10, \, 57, \, 72$                                                      \\ 
$65\ldots56$   & $13837$ & $8$             & $594, \, 5904, \, 3656, \, 8850, \, 5442, \, 12873, \, 4161, \, 63$                                \\ 
$98\ldots89$   & $1001$  & $6$             & $891, \, 893, \, 913, \, 112, \, 110, \, 90$                                                 \\ \hline 
\end{tabular}
\caption{}
\end{table}
\vspace{-.4cm}

Together with Lemma \ref{mod-lem2}, this data  proves that none of the five remaining families contain a perfect square. To highlight an example, let's consider the family $42\ldots24$. We read the values $396,\, 819,$ and $54$ from the fourth column of the first row of data. Because each of these is a quadratic non-residue modulo $N = 999$, Lemma \ref{mod-lem2} implies that the family in question contains no perfect squares.

\section{Considering families via elliptic curves} \label{elliptic} 
In this section, we give a method that determines the squares in a given single variable family  (as in \S\ref{modfamilies}). As we'll see, the squares are in one-to-one correspondence with the integral points of a specific form on certain elliptic curves. An \textit{elliptic curve} $E$ (defined over the rationals) is a projective curve given by an equation
\[ E : y^2 = x^3 + ax + b \]
for some $a,b \in \Q$ with nonzero discriminant $\Delta \coloneqq -16(4a^3 + 27b^2)$. The set of \textit{integral points} of $E$ is 
\[ E(\mathbb{Z}) \coloneqq \set{(x,y) \in \Z \times \Z : y^2 = x^3 + ax + b}\!. \]
This set is finite and fairly computable. See \cite{Si2009} for a thorough treatment of the theory of elliptic curves and specifically Chapter IX for a treatment of integral points. In what follows, we use the \texttt{IntegralPoints} command in Magma \cite{MAGM} to rigorously compute integral points on various elliptic curves.

We define some notation. Let $a,b,m$ be nonnegative integers with $1 \leq a \leq 9$, $0 \leq b \leq 9$, and $m\geq 1$. Then 
\[ \set{a_m b_n a_m : n \geq 0} \]
is a family of curious numbers of the sort that we considered in \S \ref{modfamilies}. We're interested in determining the set
\[
Q_{a,b,m} \coloneqq \set{n : a_mb_na_m \text{is a perfect square}}\!.
\]
For each $j \in \set{0,1,2}$, we set
\[ B_{a,b,m,j} \coloneqq  N_{a,b,m}^{2}\cdot10^{2j}\cdot M_{a,b,m} \]
and consider the elliptic curve
\[E_{a,b,m,j} : y^2 = x^3 + B_{a,b,m,j}. \]
Indeed, the above equation defines an elliptic curve since
\[    \Delta  =   -16(4\cdot0 +27\cdot B_{a,b,m,j}^{2}) = -16\cdot27\cdot B_{a,b,m,j}^{2} \neq 0.
\]
We're interested in the integral points of $E_{a,b,m,j}$ and, more  specifically, the subset
\[
L_{a,b,m,j} \coloneqq \set{(X,Y) \in  E_{a,b,m,j}(\Z) : (X,Y) =  ( N_{a,b,m}\cdot10^{j+k} , N_{a,b,m}\cdot10^{j}\cdot3y) \text{ for some integers } k,y \geq 0 }\!.
\]

\begin{prop}\label{ell} In the notation above, we have a bijection
\begin{align*} \Phi: Q_{a,b,m} &\longrightarrow L_{a,b,m,0} \cup L_{a,b,m,1} \cup L_{a,b,m,2}  \\
 n = 3k + j &\longmapsto  ( N_{a,b,m}\cdot10^{j+k} , N_{a,b,m}\cdot10^{j}\cdot 3\sqrt{a_m b_n a_m}),
\end{align*}
where we write $n = 3k + j$ with $j \in \set{0,1,2}$.
\end{prop}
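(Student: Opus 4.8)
The plan is to establish the bijection by showing that the equation \eqref{alg-eq} defining squares in the family translates, after a standard substitution, into the equation of the elliptic curve $E_{a,b,m,j}$ together with the constraint defining $L_{a,b,m,j}$. First I would start with $n \in Q_{a,b,m}$, so that $a_m b_n a_m$ is a perfect square, say with $3\sqrt{a_m b_n a_m} = 3y$. By \eqref{alg-eq} this means $(3y)^2 = N_{a,b,m}\cdot 10^n + M_{a,b,m}$. Writing $n = 3k+j$ with $j \in \set{0,1,2}$, so that $10^n = 10^{j}\cdot(10^{k})^3$, I would multiply the equation through by $N_{a,b,m}^2 \cdot 10^{2j}$. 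The left side becomes $\p{N_{a,b,m}\cdot 10^{j}\cdot 3y}^2$ and the right side becomes $N_{a,b,m}^3 \cdot 10^{3j}\cdot (10^k)^3 + N_{a,b,m}^2\cdot 10^{2j}\cdot M_{a,b,m} = \p{N_{a,b,m}\cdot 10^{j+k}}^3 + B_{a,b,m,j}$. Thus $(X,Y) \coloneqq \p{N_{a,b,m}\cdot 10^{j+k},\, N_{a,b,m}\cdot 10^{j}\cdot 3y}$ lies on $E_{a,b,m,j}$, and since $k,y \geq 0$ are integers, it lies in $L_{a,b,m,j}$. This shows $\Phi$ is well defined.

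Next I would check that $\Phi$ is injective: this is essentially immediate because $n$ determines $j$ and $k$ (via division with remainder by $3$), and the $X$-coordinate $N_{a,b,m}\cdot 10^{j+k}$ together with the known value of $j$ recovers $k$ and hence $n$; one should note $N_{a,b,m} \neq 0$, which holds since $N_{a,b,m} = 10^m(a\cdot 10^m + b - a)$ and $a \geq 1$, $m \geq 1$. For surjectivity, I would take a point $(X,Y) \in L_{a,b,m,j}$ for some $j \in \set{0,1,2}$, so by definition $(X,Y) = \p{N_{a,b,m}\cdot 10^{j+k},\, N_{a,b,m}\cdot 10^{j}\cdot 3y}$ for some integers $k,y \geq 0$, and it satisfies $Y^2 = X^3 + B_{a,b,m,j}$. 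Substituting and dividing through by $N_{a,b,m}^2\cdot 10^{2j}$ (legitimate since this is nonzero) recovers $(3y)^2 = N_{a,b,m}\cdot 10^{3k+j} + M_{a,b,m}$, i.e.\ \eqref{alg-eq} holds with $n = 3k+j$; hence $a_m b_n a_m = y^2$ is a perfect square, so $n \in Q_{a,b,m}$, and by construction $\Phi(n) = (X,Y)$.

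The one subtlety I would want to be careful about — and the main obstacle, such as it is — is the well-definedness of the three target sets overlapping: a priori the three sets $L_{a,b,m,j}$ for $j=0,1,2$ need not be disjoint, so I should state the codomain as their union and verify that $\Phi$ lands in exactly one piece determined by $n \bmod 3$, which makes the inverse map unambiguous. It is also worth remarking that when recovering $y$ from a point of $L_{a,b,m,j}$, the value $y$ is forced to equal $\sqrt{a_m b_n a_m}$ (the nonnegative square root), matching the formula for $\Phi$, so that $\Phi$ and the inverse genuinely compose to the identity in both directions. Everything else is routine algebraic manipulation of \eqref{alg-eq} and the definitions of $M_{a,b,m}$, $N_{a,b,m}$, $B_{a,b,m,j}$, and $L_{a,b,m,j}$.
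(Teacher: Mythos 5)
Your plan follows the paper's proof almost exactly: the same multiplication of equation (\ref{alg-eq}) by $N_{a,b,m}^2\cdot 10^{2j}$ establishes well-definedness, and surjectivity is obtained by running that computation backwards. The one place where you stop short is exactly the one place where the paper has to do real work: injectivity. You correctly flag that the three sets $L_{a,b,m,0}, L_{a,b,m,1}, L_{a,b,m,2}$ might a priori overlap, so that knowing $\Phi(n)=\Phi(n')$ as points does not obviously force $j=j'$ (comparing first coordinates only gives $j+k=j'+k'$), but you leave the verification as something you ``should'' do rather than doing it. The paper closes this by comparing \emph{second} coordinates: since $a\geq 1$ and $m\geq 1$, the integer $a_mb_na_m$ ends in the digit $a\neq 0$, hence is not divisible by $10$, hence $\sqrt{a_mb_na_m}$ is not divisible by $10$; so the equality $N_{a,b,m}\cdot 10^{j}\cdot 3\sqrt{a_mb_na_m}=N_{a,b,m}\cdot 10^{j'}\cdot 3\sqrt{a_mb_{n'}a_m}$ forces $j=j'$, and then $k=k'$ and $n=n'$. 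An even quicker way to finish, in the spirit of your remark about disjointness, is to observe that $M_{a,b,m}\neq 0$ and $N_{a,b,m}\neq 0$, so the constants $B_{a,b,m,0},B_{a,b,m,1},B_{a,b,m,2}$ are pairwise distinct and no single pair $(X,Y)$ can lie on two of the curves $E_{a,b,m,j}$; thus the $L_{a,b,m,j}$ are automatically pairwise disjoint and $j$ is determined by the image point. Either argument is short, but one of them must actually be supplied for the proof to be complete.
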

\begin{proof} The map $\Phi$ is well-defined. Indeed, the decomposition $n = 3k +j$ with $j \in \set{0,1,2}$ is unique and we now verify that the image of $\Phi$ is contained in the stated codomain. If $n = 3k+j \in Q_{a,b,m}$, then
\[
(3 \sqrt{a_m b_n a_m})^2  = N_{a,b,m}\cdot10^{3k+j} + M_{a,b,m}
\]
holds by (\ref{alg-eq}). Multiplying through  by $N_{a,b,m}^{2}\cdot10^{2j}$ and regrouping, we find that
\begin{equation*}
(N_{a,b,m}\cdot10^{j}\cdot3\sqrt{a_mb_na_m})^{2} = (N_{a,b,m}\cdot10^{j+k})^{3} + N_{a,b,m}^{2}\cdot10^{2j}\cdot M_{a,b,m}.
\end{equation*}
Consequently, $\Phi(n) \in L_{a,b,m,j}$, establishing that the image of $\Phi$ is contained in the stated codomain. On reading this argument backwards, we deduce that $\Phi$ is surjective. What remains is to show that $\Phi$ is injective.

 For this, suppose that $n = 3k+j \in Q_{a,b,m}$ and $n'=3k'+j' \in Q_{a,b,m}$  are such that $\Phi(n) = \Phi(n')$. Then
\begin{equation} \label{ell_inj}
( N_{a,b,m}\cdot10^{j+k} , N_{a,b,m}\cdot10^{j}\cdot 3\sqrt{a_m b_n a_m}) = ( N_{a,b,m}\cdot10^{j'+k'} , N_{a,b,m}\cdot10^{j'}\cdot 3\sqrt{a_m b_{n'} a_m}).
\end{equation}
By comparing the first coordinates, we find that $j+k = j' + k'$. Note that since $a,m$ are nonzero, $a_mb_na_m$ is not divisible by $10$. Thus $\sqrt{a_mb_na_m}$ is not divisible by 10 (and nor is  $\sqrt{a_mb_{n'}a_m}$). Thus comparing the second coordinates of (\ref{ell_inj}), we find that $j = j'$. Since $j + k = j' + k'$ and $j = j'$, we have that $n = n'$.
\end{proof}

The above proposition, along with the  data from Table \ref{elliptic_table} (Appendix \ref{app}), gives an alternate proof that none of the thirteen families in
$\mathcal{F}$ contain a perfect square. 
To illustrate, let's consider the family
$42\ldots24$. Here, $(a,b,m) = (4,2,1)$ and the  corresponding elliptic curves for $j \in \{0,1,2\}$ are
\begin{align*}
     E_{4,2,1,0}: y^2 &= x^3 + 23104\cdot 10^{2}\\
     E_{4,2,1,1}: y^2 &= x^3 + 23104\cdot 10^{4}\\
     E_{4,2,1,2}: y^2 &= x^3 + 23104\cdot 10^{6}.
\end{align*}
Using Magma \cite{MAGM}, we compute their integral points:
\begin{align*}
     E_{4,2,1,0}(\Z) &= \{(80, \pm 1680), (0, \pm1520), (1520, \pm59280), (-76, \pm1368)\} \\
     E_{4,2,1,1}(\Z) &=  \{(0, \pm15200)\} \\
     E_{4,2,1,2}(\Z) &= \{(0, \pm152000)\}.
\end{align*}
None of these points are of the form $( N_{a,b,m}\cdot10^{j+k} , N_{a,b,m}\cdot10^{j}\cdot3y)$ for  nonnegative integers $y,k$. Hence, 
\[ L_{4,2,1,0} = L_{4,2,1,1} = L_{4,2,1,2} = \emptyset. \]
So, by Proposition \ref{ell}, we conclude that the family $42\ldots24$ contains no perfect squares.

\bibliographystyle{amsplain}
\bibliography{Curious_Squares}

\newpage

\appendix

\section{} \label{app}

\renewcommand{\arraystretch}{1.08}
\begin{table}[H] 
\begin{tabular}{|lllp{95mm}|} \hline
Family            & $j$ & $B_{a,b,m,j}$                                                                  & Integral points of $E_{a,b,m,j}: y^2 = x^3 + B_{a,b,m,j}$ up to sign     \\ \hline
$10\ldots01$      & 0   & $ 729 \cdot 10^2$                                                            & \{($-36$, 162), (0, 270), (40, 370), (45, 405), (180, 2430), (216, 3186), (23940, 3704130)\}                                                                                    \\
                   & 1   & $ 729\cdot 10^4$                                                          & \{(0, 2700)\}                                                                                                                                                                  \\
                  & 2   & $ 729\cdot 10^6$                                                        & \{($-900$, 0), (1800, 81000), (0, 27000)\}                                                                                                                                       \\ \hline
$14\ldots41$     & 0   & $ -5239\cdot 10^2$                & \{(100, 690), (140, 1490), (160, 1890), (1589, 63337), (28261, 4750959)\}                                                                                                      \\
                    & 1   & $ -5239\cdot 10^4$                                                         & \{(376, 876)\}                                                                                                                                                                 \\
                    & 2   & $ -5239\cdot 10^6$                                                       & \{(3500, 194000)\}                                                                                                                                                             \\ \hline
$18\ldots81$       & 0   & $ -20519\cdot 10^2$                                                          & \{(960, 29710)\}                                                                                                                                                               \\
                  & 1   & $ -20519\cdot 10^4$                                                        &$\emptyset$                                                                                                                                                                           \\
                   & 2   & $ -20519\cdot 10^6$                                                      &$\emptyset$                                                                                                                                                                           \\ \hline
$40\ldots04$      & 0   & $ 46656\cdot 10^2$                                                          & \{($-144$, 1296), ($-135$, 1485), (0, 2160), (160, 2960), (180, 3240),  (720, 19440), (864, 25488), (95760, 29633040)\}                                                             \\
                   & 1   & $ 46656\cdot 10^4$                                                        & \{(0, 21600)\}                                                                                                                                                                 \\
                   & 2   & $ 46656\cdot 10^6$                                                      & \{ ($-3600$, 0), (0, 216000), (7200, 648000)\}                                                                                                                                    \\ \hline
$42\ldots24$       & 0   & $ 23104\cdot 10^2$                                                          & \{($-76$, 1368), (0, 1520), (80, 1680), (1520, 59280)\}                                                                                                                          \\
                     & 1   & $ 23104\cdot 10^4$                                                        & \{(0, 15200)\}                                                                                                                                                                 \\
                    & 2   & $ 23104\cdot 10^6$                                                      & \{(0, 152000)\}                                                                                                                                                                \\ \hline
$442\ldots244$   & 0   & $ 31047184\cdot 10^4$                                                     & \{($-4975$, 432825), (0, 557200), (5600, 697200), (44576, 9427824)\}                                                                                                             \\
                    & 1   & $ 31047184\cdot 10^6$                                                   & \{(0, 5572000), (8959776, 26819194976)\}                                                                                                                                       \\
                      & 2   & $ 31047184 \cdot 10^8$                                                 & \{($-84000$, 50120000), (0, 55720000), (1671600, 2161936000)\}                                                                                                                   \\ \hline
$4443\ldots3444$  & 0   & $ 15928032996\cdot 10^6$                                                & \{(198400, 154070000)\}                                                                                                                                                        \\
                     & 1   & $ 15928032996\cdot 10^8$                                              & \{($-356000$, 1244060000)\}                                                                                                                                                 \\
                     & 2   & $ 15928032996\cdot 10^{10}$ &$\emptyset$                                                                                                                                                                      \\ \hline
$4447\ldots7444$    & 0   & $-48136123036\cdot 10^6$                                                 &$\emptyset$                                                                                                                                                                   \\
                      & 1   & $-48136123036\cdot 10^8$   &$\emptyset$                                                                                                                                                                   \\
                     & 2   & $-48136123036\cdot 10^{10}$ &$\emptyset$                                                                                                                                                                   \\ \hline
$52\ldots25$    & 0   & $  55225\cdot 10^2$                                                         & \{(0, 2350)\}                                                                                                                                                                  \\
                    & 1   & $ 55225\cdot 10^4$                                                        & \{(0, 23500)\}                                                                                                                                                                 \\
                   & 2   & $ 55225\cdot 10^6$                                                      & \{(0, 235000)\}                                                                                                                                                                \\ \hline
$65\ldots56$    & 0   & $  13924\cdot 10^2 $           & \{(0, 1180), (80, 1380), (944, 29028)\}                                                                                                                                        \\
                     & 1   & $ 13924\cdot 10^4 $            & \{(0, 11800)\}                                                                                                                                                                  \\
                      & 2   & $ 13924\cdot 10^6 $        & \{($-2400$, 10000), (0, 118000), (4425, 317125), (751296, 651203344)\}                                                                                                                 \\ \hline
$90\ldots09$    & 0   & $ 531441\cdot 10^2$                                                         & \{ ($-324$, 4374),  (0, 7290), (360, 9990), (405, 10935), (1620, 65610), (1944, 86022),  (215460, 100011510)\}                                                                      \\
                      & 1   & $ 531441\cdot 10^4 $        & \{(0, 72900)\}                                                                                                                                                                 \\
                    & 2   & $ 531441\cdot 10^6$         & \{($-8100$, 0), (0, 729000), (16200, 2187000)\}                                                                                                                                  \\ \hline
$94\ldots49$    & 0   & $ 296225\cdot 10^2$             & \{($-200$, 4650), ($-100$, 5350), (349, 8493), (10300, 1045350)\}                                                                                                                  \\
                    & 1   & $ 296225\cdot 10^4$                                                       & \{($-800$, 49500),  (200, 54500), (625, 56625), (11416, 1220964)\}                                                                                                                \\
                    & 2   & $ 296225\cdot 10^6 $        &$\emptyset$                                                                                                                                                                           \\ \hline
$98\ldots89$    & 0   & $  7921\cdot 10^2 $            & \{(0, 890)\}                                                                                                                                                                   \\
                   & 1   & $ 7921\cdot 10^4$                                                         & \{($-400$, 3900), (0, 8900), (1424, 54468)\}                                                                                                                                 \\
                  & 2   & $ 7921\cdot 10^6$            & \{(0, 89000), (8400, $-775000$)\}  \\ \hline                                                                                                                                       
\end{tabular}
\caption{. Integral points data for \S \ref{elliptic} } \label{elliptic_table}
\end{table}

\end{document}